\newtheorem{thm}{Theorem}[section]
\newtheorem{lem}[thm]{Lemma}
\newtheorem{prop}[thm]{Proposition}
\newtheorem{defi}[thm]{Definition}
\newtheorem{rema}[thm]{Remark}
\numberwithin{equation}{section}
\def\bbR {\mathbb{R}}
\def\calO {\mathcal{O}}
\def\calE {\mathcal{E}}
\let\lto\longrightarrow
\def\pv {\;\string;\;}
\def\barC {\mkern 2mu \overline{\mkern -2mu C}}
\def\tttv {|\mkern -2mu \|} 
\def\puce {\smallskip
\indent$\scriptstyle\triangleright$\enspace\ignorespaces}
\let\rm\normalfont
\begin{document}

\title
[Finite reflection groups: Invariant functions]
{Finite reflection groups:\\
Invariant functions and\\
functions of the invariants\\ 
in finite class of differentiability}

\author{ G\'erard P. BARBAN\c CON}

\address{University of Texas at Austin}

\email{gbarbanson@yahoo.com }

\subjclass
{Primary:20F55, 58B10, Secondary:37B,57R}

\keywords
{Whitney functions, Finite reflection groups, Finite class of differentiability} 


\begin{abstract}
Let $W$ be a finite reflection group. A $W$-invariant function of class~$C^{\infty}$ may be expressed as a functions of class $C^{\infty}$ of the basic invariants. In finite class of differentiability, the situation is not this simple. Let~$h$ be the  greatest Coxeter number of the irreducible components of $W$  and $P$ be~the Chevalley mapping, if $f$ is an invariant function of class $C^{hr}$, and $F$ is the function of invariants associated by $f=F\circ P$, then $F$ is of class $C^r$. However if~$F$ is of class $C^r$, in general $f=F\circ P$ is of class $C^r$ and not of class $C^{hr}$. Here we determine the space of $W$-invariant functions that may be written as functions of class $C^r$ of the polynomial invariants and the subspace of functions $F$ of class $C^r$ of the invariants such that the invariant function $f=F\circ P$ is of class $C^{hr}$. 
\end{abstract}

\maketitle

\section{Introduction}

Let $W$ be a finite reflection group acting orthogonally on $\bbR^n $. The algebra of invariant polynomials is generated by $n$ algebraically independent $W$-invariant homogeneous polynomials and the degrees of these polynomials are uniquely determined [3], [4]. Let $p_1,\ldots,p_n$ be a set of basic invariants, we define the ``Chevalley'' mapping
\[
P:\bbR^n \lto \bbR^n, \quad  
P(x)=\big(p_1(x), ..., p_n(x)\big).
\]
Glaeser's theorem [7] shows that $W$-invariant functions of class $C^{\infty}$ may be expressed as functions of class $C^{\infty}$ of the basic invariants, and that the subalgebra $P^*(C^{\infty}(\bbR^n))$ of composite mappings of the form~${F\circ P}$ with $F$ of class~$C^{\infty}$, is closed in $C^{\infty}(\bbR^n)$. 

In finite class of differentiability, the situation is not this simple. Let~$h$ be the highest degree of the coordinate polynomials in $P$, equal to the greatest Coxeter number of the irreducible components of $W$. We have:   

\begin{prop}  [see {[1]}]\label{Yrefprop}
There exists a linear and continuous mapping
$$
C^{hr}(\bbR^n)^W \ni f \lto F \in C^r (\bbR^n)
$$
such that $f =F\circ P$.
\end{prop}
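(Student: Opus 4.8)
The plan is to build $F$ by pushing $f$ forward through $P$ on the regular set and then to control the resulting function as one crosses the branch locus. Write $\Sigma\subset\bbR^n$ for the union of the reflecting hyperplanes of $W$ and let $J=\det DP$ be the Jacobian of the Chevalley mapping; up to a nonzero scalar $J$ is the product of the linear forms cutting out the hyperplanes of $\Sigma$, so $P$ is a local diffeomorphism precisely off $\Sigma$. Fix a closed fundamental chamber $\barC$. Then $P$ restricts to a homeomorphism of $\barC$ onto the semialgebraic image $Y=P(\bbR^n)=P(\barC)$ and to a diffeomorphism of the open chamber onto the interior of $Y$. On that interior I would simply take $F=f\circ(P|_{\barC})^{-1}$, which is as differentiable as $f$ there; the whole difficulty is to show that $F$ extends to a function of class $C^r$ near all of $Y$, with derivatives of order $\le r$ bounded in terms of the derivatives of $f$ of order $\le hr$.

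First I would produce a candidate jet of order $r$ on $Y$, that is, a family of polynomials $(F_a)_{a\in Y}$ of degree $\le r$ meant to be the Taylor expansions of $F$. For a regular $a=P(x)$ these are read off from $f\circ(P|_{\barC})^{-1}$ by the inverse chain rule $\nabla_pF=\big((DP)^{-1}\big)^{T}\nabla_xf$, each $p$-differentiation introducing a factor $1/J$ through the adjugate of $DP$; for $a$ on the branch locus they come from the Taylor expansion of $f$ at $x$ to order $hr$, with the invariant part re-expressed in the $p_i$. The substance of the matter is that the $W$-invariance of $f$ forces the apparent poles in $1/J$ to cancel: the gradient of an invariant is $W$-equivariant, hence its normal derivatives vanish along each wall, and the higher-order analogue of this flatness is what makes the coefficients of $F_a$ and the Whitney remainders $F_b-\sum_{|\alpha|\le r}\tfrac1{\alpha!}\partial^\alpha F_a\,(b-a)^\alpha$ bounded by $C\,\norm{f}_{C^{hr}(K)}$ on compacta and satisfy Whitney's compatibility conditions.

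The loss of exactly $h$ derivatives is the heart of the proof and the step I expect to fight with. The model is the rank-one case $W=\mathbb Z/2$, $x\mapsto-x$, where $p_1(x)=x^2$ has degree $h=2$: an even function of class $C^{2r}$ is a $C^r$ function of $x^2$, and the factor $2$ is exactly the degree of the invariant. To reach the general case I would stratify $\bbR^n$ by isotropy type; by Steinberg's theorem each isotropy group $W_x$ is again a reflection group acting on the normal to its fixed space, so a slice argument reduces the local problem at $x$ to the same statement for $W_x$ near the origin, and I would induct on $n$. The loss at a stratum is governed by the top degree of the invariants of $W_x$, and the maximum of these over all strata is attained at the deepest one, where the whole group and its top invariant of degree $h$ intervene; there a Taylor expansion of order $hr$ of $f$ is precisely what yields $r$ usable derivatives of $F$ in the $p_n$-direction. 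Doing this bookkeeping uniformly, so that every estimate is linear in $f$, is the delicate part.

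Finally, having a jet of order $r$ on the closed set $Y$ satisfying Whitney's conditions, I would apply Whitney's extension theorem to obtain $F\in C^r(\bbR^n)$ with $f=F\circ P$. The assignment $f\mapsto(F_a)_{a\in Y}$ is visibly linear, and the estimates above give $\norm{F}_{C^r(K')}\le C_K\,\norm{f}_{C^{hr}(K)}$ for matching compacta; choosing for the extension one of the standard continuous linear extension operators then makes $f\mapsto F$ a continuous linear map from $C^{hr}(\bbR^n)^W$ to $C^r(\bbR^n)$, which is the assertion.
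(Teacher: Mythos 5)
You should first note that the paper does not actually prove this proposition: it is imported verbatim from reference [1], so your sketch can only be measured against the strategy of that reference. Your overall architecture is the right one and is essentially that of [1]: define $F$ on the interior of $P(\bbR^n)$ by inverting $P$ on a chamber, assemble from $f$ a candidate jet of order $r$ on the closed set $P(\bbR^n)$, verify Whitney's conditions with bounds that are linear in $f$ on compacta, and conclude with the Whitney extension theorem, whose continuous linear section (Theorem 3.2 of the paper) then gives the continuity of $f\mapsto F$. The two structural ingredients you name are also the correct ones: Whitney's even-function theorem as the rank-one model, and the local factorization $P=Q\circ V$ (with $V$ a Chevalley map of the isotropy group $W_S$ and $Q$ invertible) along the strata.

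The genuine gap is that your induction scheme cannot reach the one point where the theorem actually lives. The slice argument via Steinberg's theorem treats only points $x$ whose isotropy $W_x$ is a \emph{proper} reflection subgroup of $W$: there $h_S\le h$, the inductive hypothesis applies to $W_x$ on the normal slice, and $F=F_x\circ Q^{-1}$ is $C^r$ near $P(x)$. But at the deepest stratum of an irreducible factor --- the origin, where $W_x=W$ --- the slice reduction returns the statement itself, so the induction is vacuous precisely where the full loss of $h$ derivatives occurs. Your sentence ``there a Taylor expansion of order $hr$ of $f$ is precisely what yields $r$ usable derivatives of $F$ in the $p_n$-direction'' asserts the conclusion rather than proving it. What has to be shown there is the following division estimate: writing $f=T+R$, where $T$ is the Taylor polynomial of $f$ at $0$ of order $hr$ (a $W$-invariant polynomial, hence uniquely a polynomial $G(p_1,\dots,p_n)$, which furnishes the candidate jet at $P(0)$) and $R$ is an invariant $hr$-flat remainder, the function $R\circ(P|_C)^{-1}$ and its derivatives up to order $r$ --- each differentiation in the $p$-variables introducing a factor $1/J_P$, where $J_P$ vanishes at the origin to order $d=\sum_{i=1}^n(k_i-1)$ --- extend continuously by $0$ across $P(0)$ and satisfy Whitney's conditions, with constants linear in the $C^{hr}$-norms of $f$. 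The same issue, in milder form, is hidden in your phrase ``the higher-order analogue of this flatness is what makes the apparent poles in $1/J$ cancel'': that cancellation along the walls is exactly what must be established, not a lemma one can invoke. These flat-division estimates are the entire analytic content of [1]; as written, your text is a correct plan with its central step missing, not a proof.
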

 A~general counter-example shows that this result is the best possible.  

However if $F$ is of class $C^r$, in general $f=F\circ P$ is of class~$C^r$ and not of class~$C^{hr}$. So the following questions naturally arise: 

1) What is the space of $W$-invariant functions that may be written as functions of class $C^r$ of the polynomial invariants, and 

2) What is the subspace of functions $F$ of class $C^r$ of the invariants such that the invariant function $f=F\circ P$ is of class $C^{hr}$?  To answer this second question we will complete the results given in [1] and determine the subspace of $\calE^r(P(\bbR^n))$ isomorphic to $C^{hr}(\bbR^n)^W$.

\section{The Chevalley mapping}

When the reflection group $W$ is reducible, it is the direct product of its irreducible components, say 
$$
W=W^0\times W^1\times \cdots \times W^s
$$ 
and we may write $\bbR^n=\bbR^{n_0} \oplus \bbR^{n_1}\oplus \cdots\oplus \bbR^{n_s}$   as an orthogonal direct sum. The first component $W^0$ is the identity on the $W$-invariant subspace~$\bbR^{n_0}$, and $W^i$ is an irreducible finite Coxeter group acting on~$\bbR^{n_i}$
\hbox{for $i=1,\ldots,s$}. We  choose coordinates that fit with this orthogonal direct sum: if $w=w_0w_1 \cdots w_s \in W$ with $w_i\in W^i, 0\leq i\leq s$, we have for  all $x\in \bbR^n$
$$
w(x)=w(x^0,x^1,\ldots,x^s)=\big(x^0, w_1(x^1), \ldots, w_s(x^s)\big).
$$
The direct product of the identity $P^0$ acting on $\bbR^{n_0}$ and Chevalley mappings $P^i$ associated with $W^i$ acting on $\bbR^{n_i}$, $1\leq i\leq s$, is a Chevalley map associated with~$W$ acting on $\bbR^{n}$.

For an irreducible $W$ (or for an irreducible component) we   may assume    that the degrees of the polynomials $p_1, \ldots, p_n$ are in increasing order 
$$
k_1=2\leq \cdots \leq k_n=h , 
$$
where $h$ is the Coxeter number of~$W$. In the reducible case, if there is an invariant subspace $\bbR^{n_0}$, the corresponding $p_i$s say $P^0_j, 1\leq j\leq n_0$ are of degree $k_0=1$. Then for each $i=1,\ldots,s$ the $P^i_j$ are of degree~$k^i_j$, $1\leq j\leq n_i$. The $k^i_1$ are equal to~$2$, and~$k^i_{n_i}$ is the Coxeter number of $W^i$. We denote with $h$ the maximal Coxeter number  (or highest degree of the coordinate polynomials) of the irreducible components.

Let $\mathcal R$ be the set of reflections different from identity in $W$. The number of these reflections is 
$$
\mathcal R^{\#}=d=\sum_{i=1}^n (k_i-1).
$$
For each $\tau\in \mathcal R$, we consider a linear form $\lambda_{\tau}$, the kernel of which is the reflecting hyperplane 
$$
H_{\tau}=\big\{x\in \bbR^n \pv \tau(x)=x\big\}.
$$
The jacobian matrix $J_P$ of $P$, is block diagonal. 
Let $I_{n_0}, J_{P^1}, \ldots, J_{P^{n_s}}$ be the diagonal blocks. The $n_0 \times n_0$ minor in the upper left corner is~$1$ but the determinants $|J_{P^1}|, \ldots, |J_{P^{n_s}}|$, all vanish on $\bbR^{n_0}$. The Jacobian determinant of $P$ is 
$$
|J_P|= c\,\prod_{\tau \in \mathcal R} \lambda_{\tau}
$$
for some constant $c\neq 0$. The critical set is the union of the $H_{\tau}$ when $\tau$ runs through $\mathcal R$. The invariant subspace is the intersection $\bigcap_{\tau \in \mathcal R}H_{\tau}$ of the reflection hyperplanes.

A {\it Weyl chamber} $C$ is a connected component of the regular set. The other connected components are obtained by the action of $W$ and the regular set is $\bigcup_{w\in W} w(C)$. There is a stratification of $\bbR^n$ by the regular~set, the reflecting hyperplanes $H_{\tau}$ and their intersections. The~mapping $P$ is neither injective nor surjective but it induces an analytic diffeomorphism of $C$ onto the interior of $P(\bbR^n)$ and an homeomorphism that carries the stratification from the fundamental domain~$\barC  $ onto~$P(\bbR^n)$.
Finally let us recall that the Chevalley mapping is proper and separates the orbits.

\section{Whitney functions and $r$-regular jets of order $m$}

A general study of Whitney functions may be found in [9], the notations of which will be used freely.

A {\it jet of order} $m\in \mathbb N$, on a locally closed set $E\subset \bbR^n$ is a collection 
$$
A= (a_k)_{k\in \mathbb N^n, |k|\leq m}
$$
of real valued functions  $a_k$ continuous on $E$. At each point $x\in E$ the jet~$A$ determines a polynomial of degree $m$, $A_x(X)$ and we will sometimes speak of continuous polynomial fields instead of jets. 
As a function, $A_x(X)$ acts on vectors $x'-x$ tangent to $\bbR^n$ and we write
\[ 
A_x:x'\longmapsto A_x(x')=\sum_{\substack{k\in \mathbb N^n\\ |k|\leq m}} \frac{1}{k!} a_k(x) (x'-x)^k  .
\]
The space $J^m(E)$ of jets of order $m$ on $E$ is naturally provided with the Fr\'echet topology induced by the family of semi-norms 
\[
|A|_{K_s}^m = \sup_{\substack{x\in K_s\\ |k|\leq m}} \big|a_k(x)\big|, 
\] 
 where $K_s$ runs through an exhaustive countable collection of compact sets of $E$.

By formal derivation of $A$ of order $q\in \mathbb N^n, |q|\leq m$, we get jets of the form $(a_{q+k})_{|k|\leq m-|q|}$, inducing polynomials
\[ 
(D^qA)_x(x') =a_q(x)+\sum_{\substack{k>q\\ |k|\leq m}} \frac{1}{(k-q)!}a_k(x)(x'-x)^{k-q}.
\]

For $0\leq |q|\leq r\leq m$, we put
\[ 
(R_xA)^q(x')= (D^qA)_{x'}(x')-(D^qA)_{x}(x').
\]

\begin{defi}\rm
Let $A$ be a jet of order $m$ on $E$. For $r\leq m$, we say that $A$ is $r$-{\it regular on} $E$, if   for all compact set $K\subset E$, for $(x,x')\in K^2$, and for all $q\in \mathbb N^n$ with $|q|\leq  r$, it satisfies the {\it Whitney conditions}
\[ (W_q^r)\hskip 2cm
(R_xA)^q(x')=o\big(|x'-x|^{r-|q|}\big),  
\]
\end{defi}

\begin{rema}\rm
   If $m>r$ there is no need to consider the truncated field $A^r= (a_k)_{k\in \mathbb N^n, |k|\leq r}$ instead of $A$ in the conditions $(W_q^r)$. Actually $(R_xA^r)^q(x')$ and $(R_xA)^q(x')$ differ by a sum of terms $[\frac{a_k(x)}{(k-q)!}](x'-x)^{k-q}$, with $a_k$ uniformly continuous on $K$ and $|k|-|q|>r-|q|$.
\end{rema}

The space of $r$-regular jets of order $m$ on $E$ is naturally provided with the Fr\'echet topology defined by the family of semi-norms
\[\|A\|^{r,m}_{K_s}=|A|_{K_s}^m
+\sup_{\substack{ (x,x')\in K^2_s \\x\neq x',\, |k|\leq r }}
\Big(\frac{|(R_xA)^k(x')|}{|x-x'|^{r-|k|}}\Big)\]
when $K_s, s\in \mathbb N$ runs through an exhaustive family of compact sets in~$E$. This Fr\'echet space will be denoted by $\mathcal E^{r,m}(E)$.

\medskip

If $r=m, \mathcal E^r(E)$ is the space of Whitney fields of order $r$ or Whitney functions of class $C^r$ on $E$. When $E$ is locally closed we have:

\begin{thm} [see {[11]}]\label{thmref}
The restriction mapping of the space $\mathcal E^r(\bbR^n)$ to the space 
$\mathcal E^r(E)$ is surjective. There is a linear section, continuous when the spaces are provided with their natural Fr\'echet topologies.\end{thm}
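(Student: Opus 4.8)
The plan is to recall the classical Whitney extension construction and then to observe that, carried out with data depending only on the geometry of $E$, it produces a section that is linear and continuous. Since $E$ is locally closed it is closed in some open neighbourhood, and after a standard localization by a partition of unity it suffices to treat the case where $E$ is closed in $\bbR^n$; write $\Omega=\bbR^n\setminus E$ for its open complement (recall that $\mathcal E^r(\bbR^n)=C^r(\bbR^n)$, so we really seek a genuine $C^r$ extension).

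First I would fix a Whitney decomposition of $\Omega$ into a locally finite family of dyadic cubes $(Q_i)$ whose diameters are comparable to their distance to $E$, together with a subordinate partition of unity $(\varphi_i)$ satisfying the standard bounds $|D^q\varphi_i|\le C_q\,(\mathrm{diam}\,Q_i)^{-|q|}$ for $|q|\le r$. For each $i$ I would pick a point $p_i\in E$ realizing, up to a fixed factor, the distance from $Q_i$ to $E$. Given a jet $A=(a_k)_{|k|\le r}\in\mathcal E^r(E)$ I then define the extension $TA=g$ by $g=a_0$ on $E$ and
\[
g(x)=\sum_i \varphi_i(x)\,A_{p_i}(x),\qquad x\in\Omega,
\]
where $A_{p_i}(x)=\sum_{|k|\le r}\tfrac1{k!}a_k(p_i)(x-p_i)^k$ is the polynomial carried by the field at $p_i$. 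All of $(Q_i)$, $(\varphi_i)$, $(p_i)$ depend only on $E$, and $A\mapsto A_{p_i}$ is linear, so $A\mapsto g$ is manifestly linear; it also obviously satisfies $g=a_0$ on $E$.

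The heart of the argument, and the step I expect to be the main obstacle, is to show that $g\in C^r(\bbR^n)$ and that $D^qg=a_q$ on $E$ for every $|q|\le r$. This is exactly where the $r$-regularity of $A$ enters. For $x\in\Omega$ near $E$ only finitely many $\varphi_i$ are active, and for a nearest boundary point $x'\in E$ the differences $A_{p_i}(x)-A_{x'}(x)$ are controlled by the Whitney conditions $(W^r_q)$, which furnish $(R_{x'}A)^q(x)=o\big(|x-x'|^{r-|q|}\big)$. Differentiating the sum and using $\sum_i\varphi_i\equiv1$, so that $\sum_i D^q\varphi_i\equiv0$ for $q\neq0$, one can insert the polynomial $A_{x'}$ for free and then match the negative powers $(\mathrm{diam}\,Q_i)^{-|q|}$ coming from the $\varphi_i$ against the powers $|x-x'|^{r-|q|}$ supplied by the Whitney estimates; the delicate bookkeeping is the accumulation of infinitely many cubes as $x$ approaches $E$ and the telescoping of the $o(\cdot)$ terms, which shows that each $D^qg(x)$ tends to $a_q(x'_0)$ as $x\to x'_0\in E$. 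The very same chain of estimates is quantitative: it bounds $\|g\|_{C^r(K')}$ by a seminorm $\|A\|^{r,r}_{K}$ of the field over a slightly larger compact $K\subset E$, so $T$ is continuous for the Fr\'echet topologies. This establishes simultaneously the surjectivity of the restriction map and the existence of the continuous linear section.
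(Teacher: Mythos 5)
The paper offers no proof of this statement: it is Whitney's extension theorem, quoted from [11] (see also Tougeron [9]), and your proposal reproduces precisely the construction of that source --- Whitney decomposition of the complement into dyadic cubes, a subordinate partition of unity with the bounds $|D^q\varphi_i|\le C_q(\mathrm{diam}\,Q_i)^{-|q|}$, near-nearest points $p_i\in E$, and the extension $\sum_i \varphi_i A_{p_i}$ --- including a correct identification of where the $r$-regularity of the jet enters, so for $E$ closed your argument is the standard and correct one. The one step you should fix is the opening reduction: ``after a standard localization by a partition of unity it suffices to treat the case where $E$ is closed in $\bbR^n$'' cannot be made to work, because for a locally closed but non-closed $E$ the conclusion as literally stated is false --- take $E=(0,1)\times\{0\}\subset\bbR^2$ and the $0$-regular jet $a_0(x_1,0)=1/x_1$, a perfectly good element of $\mathcal E^0(E)$ that admits no continuous extension to $\bbR^2$. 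The accurate formulation (Whitney's) extends a field on $E$ to a $C^r$ function on the open set in which $E$ is closed; this imprecision lies in the theorem's wording rather than in your construction, and it is immaterial for the paper, which applies the result to $E=P(\bbR^n)$, a closed set since the Chevalley mapping is proper --- exactly the case your argument covers.
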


When $A\in \mathcal E^r(E)$ and $|q|\leq r$, the formal derivation $D^qA$ and the regular derivative $\partial^{|q|}A/\partial x^q$ define the same polynomial 
\[
(D^qA)_x(x') = \Big( \frac{\partial^{|q|}A}{\partial x^q}\Big)_{\!x}(x')
\]
and they may be identified.

\smallskip

In general the semi-norms $\|.\|^r_{K_s}$ and $|.|^r_{K_s}$ are not equivalent on $\calE^r(E)$.  However if $E$ is an open set $\calO$, the semi-norms are equivalent. In this case $\calE^r(\calO)$ is the space of fields of Taylor polynomials of functions in~$C^r(\calO)$ and the two spaces $\calE^r(\calO)$ and~$C^r(\calO)$ may be identified.

\begin{defi}  [see {[10]}]\rm
We say that a compact set $K \subset \bbR^n$ connected by rectifiable arcs, is {\it Whitney $1$-regular} if the geodesic distance in K is equivalent to the Euclidian distance.
That is,  there is a constant $k_K>0$ such that for all $(x,x')\in K^2$, there is a rectifiable arc from $x$ to $x'$ in $K$ with length $\ell(x,x')\leq k_K|x-x'|$. 
\end{defi}

\begin{prop}  [see {[10]}]\label{Yrefprop}
If the compact set $K$ is $1$-regular, the norms $\|.\|_K^r$ and~$|.|_K^r$ are equivalent on $\mathcal E^r(K)$.
\end{prop}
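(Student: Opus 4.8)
The inequality $|A|_K^r \le \|A\|_K^r$ is built into the definition, so the whole point is to dominate the Whitney quotients: the plan is to produce a constant $C=C(K,n,r)$ with
\[
\frac{|(R_xA)^k(x')|}{|x-x'|^{\,r-|k|}} \le C\,|A|_K^r \qquad (x\ne x',\ |k|\le r).
\]
When $|k|=r$ the exponent is $0$ and the quotient is just $|a_k(x')-a_k(x)|\le 2|A|_K^r$, so nothing is needed there (and $1$-regularity is not used). The substance is the range $|q|=|k|<r$, where I will exploit the hypothesis that $K$ is $1$-regular.

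First I would replace the abstract jet by a genuine function: by Theorem~\ref{thmref}, extend $A$ to a field in $\calE^r(\bbR^n)=C^r(\bbR^n)$, i.e.\ fix $g\in C^r(\bbR^n)$ whose Taylor field on $K$ is $A$, so that $\partial^\alpha g=a_\alpha$ at every point of $K$ for $|\alpha|\le r$. Fix $x,x'\in K$ and, using $1$-regularity, choose a rectifiable arc $\gamma$ from $x$ to $x'$ inside $K$ of length $\ell\le k_K|x-x'|$, parametrized by arc length on $[0,\ell]$ (so $|\gamma'|=1$ a.e.\ and $\gamma$ is Lipschitz). The object to estimate is $(R_xA)^q(x')=\Phi(x')-\Phi(x)$, where $\Phi(y)=(D^qA)_y(x')=\sum_{|k|\le r-|q|}\tfrac{1}{k!}\,\partial^{q+k}g(y)(x'-y)^k$.

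The key difficulty, which I expect to be the main obstacle, is that $\Phi$ is \emph{not} $C^1$: its top layer $|k|=r-|q|$ involves order-$r$ derivatives of $g$, which are only continuous. I would therefore split $\Phi=\Phi_{\mathrm{low}}+\Phi_{\mathrm{top}}$ according to $|k|\le r-|q|-1$ and $|k|=r-|q|$. Since $|q|<r$ forces every term of $\Phi_{\mathrm{top}}$ to carry $(x'-x')^k$ with $|k|\ge 1$, one has $\Phi_{\mathrm{top}}(x')=0$, whence $(R_xA)^q(x')=\bigl(\Phi_{\mathrm{low}}(x')-\Phi_{\mathrm{low}}(x)\bigr)-\Phi_{\mathrm{top}}(x)$. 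Now $\Phi_{\mathrm{low}}$ uses only derivatives of $g$ of order $\le r-1$, hence is $C^1$, and a standard telescoping cancellation collapses its gradient to the top-order terms alone:
\[
\nabla\Phi_{\mathrm{low}}(y)\cdot v=\sum_{i}v_i\!\!\sum_{|k|=r-|q|-1}\!\!\frac{1}{k!}\,\partial^{q+k+e_i}g(y)\,(x'-y)^k .
\]
The fundamental theorem of calculus along the Lipschitz arc then gives $\Phi_{\mathrm{low}}(x')-\Phi_{\mathrm{low}}(x)=\int_0^\ell \nabla\Phi_{\mathrm{low}}(\gamma(s))\cdot\gamma'(s)\,ds$, which is legitimate precisely because $\Phi_{\mathrm{low}}$ is $C^1$ and $\gamma$ is rectifiable.

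Finally I would collect the estimates, the crucial gain being that every derivative appearing is evaluated on $\gamma\subset K$, where it coincides with some $a_\alpha$ of order $r$ and is thus bounded by $|A|_K^r$. Since any point $\gamma(s)$ satisfies $|x'-\gamma(s)|\le|x'-x|+s\le(1+k_K)|x-x'|$, the factors $(x'-\gamma(s))^k$ with $|k|=r-|q|-1$ are $O(|x-x'|^{r-|q|-1})$; integrating against $\int_0^\ell|\gamma'|=\ell\le k_K|x-x'|$ bounds the integral term by $C|A|_K^r|x-x'|^{r-|q|}$, and $\Phi_{\mathrm{top}}(x)=\sum_{|k|=r-|q|}\tfrac1{k!}a_{q+k}(x)(x'-x)^k$ is bounded the same way. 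Dividing by $|x-x'|^{r-|q|}$ yields the desired uniform bound, hence $\|A\|_K^r\le(1+C)|A|_K^r$, and the two norms are equivalent.
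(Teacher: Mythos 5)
The paper itself gives no proof of this proposition --- it is quoted from Whitney [10] --- so there is nothing internal to compare against; judged on its own, your proof is correct and complete, and it is essentially the classical argument. All the key points are handled properly: the case $|k|=r$ is dispatched trivially; the extension $g$ supplied by the Whitney extension theorem (Theorem 3.2, from [11], so no circularity with the statement being proved) is used only to make $\Phi_{\mathrm{low}}$ a genuine $C^1$ function so that the fundamental theorem of calculus applies along the Lipschitz arc, while every derivative value that enters the final estimate is taken at points of $K$, where it equals a jet component and is therefore bounded by $|A|_K^r$; the telescoping identity $\partial_i\Phi_{\mathrm{low}}(y)=\sum_{|k|=r-|q|-1}\frac{1}{k!}\,\partial^{q+k+e_i}g(y)\,(x'-y)^k$ is exactly right, and it is the step that converts $1$-regularity ($\ell\le k_K|x-x'|$, $|x'-\gamma(s)|\le(1+k_K)|x-x'|$) into the gain of one full power of $|x-x'|$ needed to reach the exponent $r-|q|$. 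The resulting constant depends on $n$, $r$ and the regularity constant $k_K$ only, which is what equivalence of the seminorms requires.
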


\begin{prop}  [see {[2]}]\label{Yrefprop}
The image by the Chevalley mapping $P$ of any closed ball centered at the origin is Whitney $1$-regular.
\end{prop}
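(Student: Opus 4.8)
The plan is to transport the question into the fundamental domain, construct candidate arcs as images of straight segments, and thereby reduce Whitney $1$-regularity to a single Lipschitz-type estimate on arc lengths; the whole difficulty then concentrates in that estimate.

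First I would reduce to the fundamental domain. Since $W$ acts orthogonally, the closed ball $\overline B(0,\rho)$ is $W$-invariant, and since $\barC$ meets every orbit, each $x\in\overline B(0,\rho)$ has a representative $x''\in\barC$ in the same orbit with $|x''|=|x|\le\rho$; as $P$ is constant on orbits this gives $P(\overline B(0,\rho))=P(D)$ with $D:=\barC\cap\overline B(0,\rho)$. Because $\barC$ is an intersection of half-spaces, $D$ is compact and convex, and because $P$ separates orbits while $\barC$ is a fundamental domain, $P|_D$ is a continuous bijection onto $K:=P(D)$, hence a homeomorphism. Finally $P$ is polynomial, hence Lipschitz on the compact $D$, so the image of any segment is a rectifiable arc; thus $K$ is compact and connected by rectifiable arcs, and it remains only to compare the geodesic and Euclidean distances on $K$.

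Next I would fix $y=P(x)$ and $y'=P(x')$ with $x,x'\in D$ and use the straight segment $\sigma(t)=x+t(x'-x)$, which lies in $D$ by convexity. Its image is a rectifiable arc in $K$ joining $y$ to $y'$ of length
\[
\ell=\int_0^1\big|J_P(\sigma(t))\,(x'-x)\big|\,dt ,
\]
so Whitney $1$-regularity reduces to producing a constant $k_K$, depending only on $K$, with $\ell\le k_K\,|y-y'|$. Equivalently, one must show that the semialgebraic set $K$ is normally embedded: its inner metric is dominated by the Euclidean one (the reverse domination always holds since $|y-y'|\le\ell$).

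The hard part is precisely this estimate, and the obstruction is the vanishing of $J_P$ on the critical set $\bigcup_{\tau\in\mathcal R}H_\tau$: on the walls of $C$ the map degenerates, the image $K$ acquires cusps (already visible at the origin, where all the $H_\tau$ meet), and the cancellation in $p_i(x')-p_i(x)=\int_0^1\nabla p_i(\sigma(t))\cdot(x'-x)\,dt$ may make $|y-y'|$ far smaller than the integrand defining $\ell$. To control it I would exploit three features. The block-diagonal form of $J_P$ together with the stratification of $\barC$ by its faces lets one model $P$, near a face of codimension $k$, as a product of a lower-rank Chevalley map in the tangential (degenerate) directions and a nonsingular map in the transverse directions, reducing the estimate to lower rank. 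The weighted homogeneity $P(tx)=\big(t^{k_1}p_1(x),\dots,t^{k_n}p_n(x)\big)$ makes $K$ self-similar near the origin under the anisotropic dilations, so the ratio $\ell/|y-y'|$ near the cusp at $0$ can be bounded by its value on a fixed annular region, and away from the critical set $P$ is a diffeomorphism of bounded distortion; a \L ojasiewicz-type lower bound for $|P(x)-P(x')|$ adapted to this stratification controls the surviving cancellation. Assembling these bounds over the finitely many strata should yield a uniform $k_K$. I expect the delicate point to be the transition regions, where $\sigma$ passes from the neighborhood of one stratum to that of another: this is exactly where the homogeneity rescaling and the lower-rank induction must be made to fit together.
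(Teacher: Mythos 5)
Your reduction to the fundamental domain is sound: $P(\overline B(0,\rho))=P(D)$ with $D=\barC\cap\overline B(0,\rho)$ compact and convex, and $P|_D$ a homeomorphism onto $K$. The fatal problem is the step where you say the whole difficulty concentrates: the inequality $\ell\le k_K\,|y-y'|$, with $\ell$ the length of the image under $P$ of the straight chord $[x,x']$, is \emph{false}, so you have reduced the proposition to a statement that cannot be proved. A concrete counterexample: let $W=I_2(3)=A_2$ act on $\bbR^2$, with invariants $p_1=x_1^2+x_2^2=r^2$ and $p_2=\mathrm{Re}\,(x_1+ix_2)^3=r^3\cos 3\theta$, and chamber $C=\{0<\theta<\pi/3\}$. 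Take $x$ at polar angle $0$ and $x'$ at polar angle $\pi/3$, both of radius $t$, i.e.\ on the two walls of $\barC$. Then $y=P(x)=(t^2,t^3)$ and $y'=P(x')=(t^2,-t^3)$, so $|y-y'|=2t^3$. Along the chord $[x,x']$ the radius decreases from $t$ to $t\cos(\pi/6)$ and comes back, so the coordinate $p_1=r^2$ of the image sweeps from $t^2$ down to $\tfrac34t^2$ and back up; hence $\ell\ge\tfrac12 t^2$, and $\ell/|y-y'|\ge 1/(4t)\to\infty$ as $t\to 0$. No constant $k_K$ exists for your family of arcs: the cancellation you propose to ``control'' is genuine, and the image of a chord is simply not a quasi-geodesic of $K$ near the singular strata.

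The same example shows what a correct argument must produce. The geodesic in $K$ between $(t^2,t^3)$ and $(t^2,-t^3)$ is essentially the vertical segment $\{p_1=t^2,\ |p_2|\le t^3\}$ (which does lie in $K$, since $K$ near the cusp is $\{|p_2|\le p_1^{3/2}\}$ intersected with the image of the ball), and its preimage in $\barC$ is the \emph{circular} arc of radius $t$ joining $x$ to $x'$: the lifted paths must bend along orbit spheres and along the strata, not follow straight lines. This is precisely where the work of the cited proof [2] lies, and it is also why the remainder of your outline does not salvage the argument: the rank induction near faces, the anisotropic rescaling near the origin and a \L ojasiewicz-type lower bound are stated as intentions (``should yield'', ``I expect''), and even if each were made precise they would have to be deployed to estimate the length of a suitably chosen curved lift, not of the chord image. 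As written, the proposal rests on a false key estimate and leaves the actual content of the proposition unproved.
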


This entails that any compact in $P(\bbR^n)$ is Whitney 1-regular.
Hence on $\calE^r(P(\bbR^n))$ the families of semi-norms $\|.\|_K^r$ and $|.|_K^r$ are equivalent.

\section{Functions of class $\mathcal{C}^r$ of the invariants and invariant functions} 

We consider the homomorphism induced by $P$:
\[
P^*: \calE^r\big(P(\bbR^n)\big) \longrightarrow \calE^r(\bbR^n),\quad \;P^*(F)=F\circ P .
\]
For any $(a,x)\in \bbR^n\times \bbR^n $, by the Taylor formula for $F$ between $P(a)$ and $P(x)$, either by using an extension of $F$ to $\calE^r(\bbR^n)$ given by Whitney extension theorem or by using a Taylor integral remainder along an integrable path satisfying the inequality given by 
the $1$-regularity property of $P(\bbR^n)$, we have:
\[
F\big[P(x)\big]=F\big[P(a)\big]
+\sum_{1\leq |\beta|\leq r} \frac{1}{\beta !} D^{\beta}
F\big[P(a)\big]\big(P(x)-P(a)\big)^{\beta}+o\big(|P(x)-P(a)|^r\big).
\]

Expanding $P(x)-P(a)$ by the polynomial Taylor formula, we get a polynomial in $x-a$ of degree $h$. Hence for $f=F\circ P$,
\[
f(x)=f(a)
+
\sum_{1\leq |\alpha|\leq hr} \frac{1}{\alpha !} f_{\alpha}(a) (x-a)^{\alpha} +o\big(|P(x)-P(a)|^r\big).
\]
On a compact $K\subset \bbR^n$ containing $[a,x]$, there exists a constant $C_K$ such that $|P(x)-P(a)|^r\leq C_K |x-a|^r $. 

\smallskip

When $|\alpha|\leq r$ the $f_{\alpha}$ are continuous derivatives. 
When $r<|\alpha|\leq hr$ the $f_{\alpha}$ are obtained by the composition process with $|\beta|\leq r$ and they are continuous, so $f$ belongs to $\mathcal E^{r,hr}(\bbR^n)$.    

\begin{lem}
Let $F$ be of class $C^r$ and $f=F\circ P$. For $0\leq |\beta|\leq r,\, F_{\beta}\circ P(x)$ is a linear combination of some $f_{\alpha}(x),\, 0\leq |\alpha|\leq hr$. 
\end{lem}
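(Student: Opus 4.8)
The plan is to turn the composition computation recorded just before the statement into an exact polynomial identity between jets, and then to invert the resulting linear system. Writing $y=x-a$ and $q_i(a,y)=p_i(a+y)-p_i(a)$, the Taylor expansion of $F$ to order $r$ together with the polynomial expansion of $P(x)-P(a)$ gives, as an identity between polynomials in $y$ of degree $\le hr$,
\[
\sum_{|\alpha|\le hr}\frac{1}{\alpha!}\,f_\alpha(a)\,y^\alpha
=\sum_{|\beta|\le r}\frac{1}{\beta!}\,(F_\beta\circ P)(a)\prod_{i=1}^n q_i(a,y)^{\beta_i},
\]
there being no truncation since $\deg_y\prod_i q_i(a,y)^{\beta_i}=\sum_i k_i\beta_i\le h|\beta|\le hr$. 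Comparing the coefficients of $y^\alpha$ yields $f_\alpha(a)=\sum_{|\beta|\le r}M_{\alpha\beta}(a)\,(F_\beta\circ P)(a)$ with $M_{\alpha\beta}(a)=(\alpha!/\beta!)\,[y^\alpha]\prod_i q_i(a,y)^{\beta_i}$, a polynomial in $a$. Thus the lemma reduces to solving this system, whose coefficient matrix $M(a)$ is polynomial, for the quantities $(F_\beta\circ P)(a)$ in terms of the $f_\alpha(a)$.

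First I would show that $M(a)$ has full column rank at \emph{every} $a$, i.e.\ that $M(a)u=0$ forces $u=0$. By the displayed identity, $M(a)u=0$ means the polynomial $\sum_{|\beta|\le r}(u_\beta/\beta!)\prod_i q_i(a,y)^{\beta_i}$ vanishes identically in $y$. Setting $z_i=p_i(a+y)-p_i(a)$, this is a polynomial relation $\sum_\beta(u_\beta/\beta!)z^\beta\equiv 0$; since $P$ is an analytic diffeomorphism on each Weyl chamber, the map $y\mapsto(z_1,\dots,z_n)$ has open image, so the relation holds on a nonempty open set, hence identically, forcing every $u_\beta=0$. Equivalently, the polynomials $y\mapsto\prod_i q_i(a,y)^{\beta_i}$, $|\beta|\le r$, are linearly independent, which is exactly the algebraic independence of the translates $p_1-p_1(a),\dots,p_n-p_n(a)$ of the basic invariants, and these are again algebraically independent.

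With injectivity in hand, I would produce the coefficients by downward induction on the weighted degree $\omega(\beta)=\sum_i k_i\beta_i$. The top homogeneous part of $\prod_i q_i^{\beta_i}$ is the constant-coefficient form $\prod_i p_i(y)^{\beta_i}$ of degree $\omega(\beta)$; grouping the identity by homogeneous degree in $y$, the highest weighted degrees are reached only by these leading forms, which are linearly independent by algebraic independence. One may therefore read off the $(F_\beta\circ P)(a)$ with $\omega(\beta)$ maximal from the top-degree coefficients $f_\alpha(a)$ using constant coefficients, then descend, at each stage subtracting the contributions of the already-determined $\beta'$ with larger $\omega(\beta')$; those subtracted terms carry the lower-degree parts of the $q_i$, which depend polynomially on $a$. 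This yields $(F_\beta\circ P)(a)=\sum_\alpha c_{\beta\alpha}(a)\,f_\alpha(a)$ with each $c_{\beta\alpha}$ polynomial in $a$ (alternatively, since $M(a)$ is injective and polynomial, the left inverse $\bigl(M(a)^{\!\top}M(a)\bigr)^{-1}M(a)^{\!\top}$ is real-analytic in $a$), which is the asserted linear combination.

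The crux is the passage through the critical set. Away from it one could invert using only first order, since the degree-one component of the identity reads $(f_{e_k}(a))_k=J_P(a)^{\!\top}\bigl(F_{e_j}\circ P(a)\bigr)_j$ with $J_P(a)$ invertible; but on the reflecting hyperplanes $J_P$ drops rank and this fails, already visibly for $W=\{\pm1\}$, where $f_1=2a\,F'(a^2)$ degenerates at the origin. The whole point is that the high-order coefficients $f_\alpha$ retain enough information precisely because the leading forms $\prod_i p_i^{\beta_i}$ remain linearly independent everywhere; it is the algebraic independence of the invariants, rather than the pointwise rank of $J_P$, that carries the argument.
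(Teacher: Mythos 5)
Your proposal is correct, and it diverges from the paper's proof at the decisive step, so it is worth comparing the two. Both arguments start from the same exact identity between jets: the Taylor expansion of $f$ at $a$ identified with the composed expansion $\sum_{|\beta|\le r}\frac{1}{\beta!}(F_\beta\circ P)(a)\,(P(x)-P(a))^{\beta}$, the coefficients $f_\alpha$ for $r<|\alpha|\le hr$ being defined by the composition process. The paper inverts this system by forcing triangularity: it invokes Mehta's explicit basic invariants so that each $p_i$ contains a nonzero monomial $c_i x_1^{k_i}$ while $\partial^{k_i}p_j/\partial x_1^{k_i}=0$ for $j<i$, obtains $f_{k_i,0,\ldots,0}=c_i k_i!\,F_i\circ P+\sum_{s>i}F_s\circ P\,\partial^{k_i}p_s/\partial x_1^{k_i}$, solves by decreasing induction on $i$ (with separate discussions for $A_n$, $I_{2p}$ and $D_n$, the last using $\partial^n q/\partial x_1\cdots\partial x_n=1$ for $q=x_1\cdots x_n$), and then runs an outer induction on $r$, applying the base case $r=1$ to $g=G\circ P$ with $G=F_\beta$. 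You instead invert globally: grading the identity by homogeneous degree in $y$, observing that the leading form of $\prod_i q_i(a,y)^{\beta_i}$ is the $a$-independent form $\prod_i p_i(y)^{\beta_i}$ of weighted degree $\omega(\beta)=\sum_i k_i\beta_i$, and descending on $\omega(\beta)$ using the linear independence of these forms, which is exactly the algebraic independence of the basic invariants. This is basis-free (no appeal to Mehta's invariants), requires no case analysis (degree coincidences, as for $D_n$ with $n$ even, are harmless), and treats all $|\beta|\le r$ in a single pass with no induction on $r$. What the paper's route buys is explicitness — it exhibits that the pure derivatives $f_{k_i,0,\ldots,0}$ already suffice, which is convenient for computation — while yours buys generality and robustness; your closing observation that it is algebraic independence, not the pointwise rank of $J_P$, that carries the inversion through the critical set is precisely the right diagnosis. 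Both proofs produce coefficients that are polynomial (or, via your least-squares left inverse $\bigl(M(a)^{\top}M(a)\bigr)^{-1}M(a)^{\top}$, real-analytic) in the base point, hence bounded on compacts, which is what the subsequent inequality $|F|^r_{P(K)}\le C_K|f|^{hr}_K$ requires.
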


\begin{proof}
We have $f_0(x)=F_0(P(x))$.
By induction, let us first assume $r=1$. We identify
\[
f_x(x') = f_0(x)
+
\sum_{1\leq|\alpha| \leq h}
\frac{1}{\alpha  !} f_\alpha (x) (x_1'-x_1)^{\alpha_1} \cdots (x_\ell'-x_\ell)^{\alpha_\ell}
\]
with 
\[
f_x(x')=F_0\big(P(x)\big)
+\sum_1^\ell F_i\circ P(x) 
\Big(
\sum_{|\alpha|=1}^{k_i} 
\frac{1}{\alpha !} \frac{\partial^{|\alpha|}p_i}{\partial x^\alpha} (x) (x'-x)^{\alpha}
\Big),
\]
where $F_i$ stands for $F_\beta, \beta_i=1, \beta_j=0$ if $i \neq j $.

\smallskip

For $|\alpha|=k_i$, we get
\[
f_{\alpha} 
=
F_i\circ P \; 
\frac{\partial^{k_i}p_i}{\partial x^{\alpha}} 
+
\sum_{s>i} F_s\circ P\; \frac{\partial^{k_i}p_s}{\partial x^{\alpha}}\cdot
\]
In particular when $|\alpha|=h$, we have  $f_{\alpha}=F_n\circ P\; ({\partial^hp_n}/{\partial x^{\alpha})}$ and since~$p_n$ is of degree $h$ there is a $\partial^h p_n/\partial x^{\alpha}$ which is not $0$. 

\smallskip

Hence the result for $F_n$. Solving the equations in succession gives the result for the $F_i\circ P$, $i=1\ldots n$.

\smallskip

For more explicit computations, observe that if $W$ is reducible, it would be sufficient to study each irreducible component $W_i$ in each~$\bbR^{n_i}$ and gather the results at the end.
For an irreducible component we may use the polynomial invariants given in [8]. Disregarding $D_n$ for a while, for the other groups the $k_i$ are distinct and there is an invariant set of linear forms $\{L_1, \ldots, L_v\}$ such that their symmetric functions $\sum_{j=1}^v L_j^k$ are $W$-invariant and we may take $p_i=\sum_{j=1}^v [L_j(x)]^{k_i}$ with the~$k_i$s determined in [4]. At least one of the $L_j$ contains a monomial in $X_1$, bringing in $p_i(X)$ a monomial in $X_1^{k_i}$ that cannot be canceled since the~$k_i$ are all even, with two exceptions: $A_n$ and $I_{2p}$.

\puce
 For $I_{2p}$ we may choose 
 $$
 p_1(X)=X_1^2+X_2^2
 \quad \hbox{and} \quad
 p_2(X)= \sum_{i=1}^p (X_1 \cos 2j\theta+X_2\sin 2j\theta)^p
 $$ 
 in which the coefficient of $X_1^p$ is $\sum_{j=1}^P(\cos 2j\theta)^p \neq 0$.

\puce
For $A_n$ we may take $L_i=X_i, i=1,\ldots,n+1$ and there is no possible cancelation either. 

\smallskip

Finally for $D_n$ if we choose as basic invariant polynomials 
$$
p_j=\sum_{i=1}^n x_i^{2j} \enspace  (j=1, \ldots,n-1)
 \quad \hbox{and} \quad
 q(x)=x_1x_2\cdots x_n , 
 $$
 we may use the above method when $1\leq j\leq n-1$, and consider $\partial^n q/\partial x_1\cdots \partial x_n=1$ to get the continuity of $(\partial F/\partial q)\circ P$.

\smallskip

Anyway, one has  $\partial^{k_n}p_n/\partial x_1^{k_n}=k_n!c_n$ for some constant $c_n\neq 0$, while for $j<n,\; \partial^{k_n}p_j/\partial x_1^{k_n}=0$, since the greatest exponent of $x_1$ in $p_j(x)$ \hbox{is $k_j<k_n$}. The identification shows that 
$$
c_n\,F_n\circ P\,(x)= \frac 1  {k_n!}  f_{k_n,0,\ldots,0} (x)    \quad \hbox{with \ $c_n\neq 0$.}
$$

Assuming that when $s>i$, the $F_s\circ P$ are linear combinations of the~$f_{\alpha}$, 
$|\alpha|\leq h$, since $p_i(x)$ contains a monomial in $x_1^{k_i}$, we have 
$$
\frac{\partial^{k_i}p_i}{\partial x_1^{k_i}}=k_i! \, c_i
$$
for some constant $c_i\neq 0$, and $ \partial^{k_i}p_j/\partial x_1^{k_i}=0$ for $j<i$. The identification now gives
\[ 
f_{k_i,0,\ldots,0} 
=
c_i \, k_i! \, F_i\circ P
+
\sum_{s>i}F_s\circ P \frac{\partial^{k_i}p_s}{\partial x_1^{k_i}} \cdot
\]   
With the induction assumption we have the result for $F_i\circ P$ and by decreasing induction for all the $F_j\circ P, j=1,\ldots, n$.

Let us assume that the statement of the lemma is true when $r\leq k$. When $|\beta|=k, F_{\beta}\circ P$ is a linear combination of some $f_{\alpha}$s with $|\alpha|\leq hk$. By using the basis step of the induction for the function $g = G\circ P$ \hbox{with $G=F_{\beta}$}, we get that the $G_i\circ P$ are linear combinations of $g_{\alpha}$, $|\alpha|\leq h$. The induction  assumption for~$g$ shows that for $|\gamma|=k+1$, $F_{\gamma}\circ P$ is a linear combination of $f_{\alpha}$, $|\alpha|\leq h(k+1)$. This achieves the induction and the proof of the lemma.
\end{proof}

 From the lemma we get that for any compact $K\subset \bbR^n$ there is a constant $C_K$ depending only on $K$ and $W$ such that 
 $$
 |F|^r_{P(K)}\leq C_K |f|^{hr}_K  . 
 $$
 Any compact in $P(\bbR^n) $ being Whitney $1$-regular, there is a constant $C_K^1$ such that $\|F\|^r_{P(K)}\leq C_K^1 |f|^{hr}_K$, and of course  $\|F\|^r_{P(K)}\leq C_K^1 \|f\|^{r,hr}_K$.

The linear injective mapping $P^*$ is surjective on its image $\mathcal J$. 
This image
$\mathcal J$ is the space of invariant functions associated with fonctions of class $C^r$ of the invariants. By the above inequality, the inverse 
$$
(P^*)^{(-1)}:\mathcal J \longrightarrow \calE^r \big( P(\bbR^n)\big)
$$
is continuous. Hence by the Banach theorem it is an isomorphism of Fr\'echet spaces. The image $\mathcal J \subset{\calE^{r,hr}(\bbR^n)}$ of $P^*$ is closed in both $\calE^r(\bbR^n)$ and $\calE^{r,hr}(\bbR^n)$. 

The map
$$
P^*:\calE^r\big(P(\bbR^n)\big)\longrightarrow \mathcal J
$$ 
is also an isomorphism of Fr\'echet spaces and for each compact $K\subset \bbR^n$, there is a constant $C_K$ such that $\|f\|^{r,hr}_K\leq \|F\|^r_{P(K)}$. 

\medskip

Conclusion: the space of invariant functions given by a function of class $C^r$ of the invariants is a closed subspace of $\calE^{r,hr}(\bbR^n)$, isomorphic to the Fr\'echet space~$\calE^r(P(\bbR^n)$ .

\section{Invariant functions of class $C^{hr}$ and functions of the invariants}

There exists a linear and continuous mapping (see [1]):
\[ 
L: C^{hr}(\bbR^n)^W \longrightarrow \mathcal E^r\big(P(\bbR^n)\big)\quad L(f)=F, 
\hskip 2mm f=F\circ P .
\] 
Thanks to the Whitney extension theorem, the target might be $C^r(\bbR^n)$ as well. 
The map $L$ is a version of $P^{*(-1)}$, we gave it another name because the source and target are different from those of $P^{*(-1)}$ in the previous section.

Clearly $L$ is injective and surjective on its image.

Let $C$ be a Weyl chamber. Its image $P(C)$ is the interior of $P(\bbR^n)$ where F is of class $C^{hr}$. The loss of differentiability happens on the border of $P(\bbR^n)$.

 Let $S$ be a stratum of $\barC  $, intersection of $p$ reflecting hyperplanes. On $P(S)$ the loss of differentiability is governed by $W_S$, the isotropy subgroup of $S$, and $F$ is of class $C^{hr/h_S}$, where $h_S$ is the highest degree of the $W_S$-invariants. Moreover, we have: 

\begin{prop} \label{Yrefprop}
If $f=F\circ P$ is of class $C^{hr}$, the derivatives of $F$ of order $\beta=(\beta_1, \ldots, \beta_n)$  with $\sum_{i=1}^n \beta_i k_i\leq hr$, are continuous on $P(\bbR^n)$. 
\end{prop}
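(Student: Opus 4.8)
The plan is to work on the regular set, where $P$ is a local analytic diffeomorphism and every $\partial^\beta F$ exists, and to express each $\partial^\beta F\circ P$ with $\sum_i\beta_ik_i\le hr$ as a combination of the continuous derivatives $f_\alpha$, $|\alpha|\le hr$, in a form that extends continuously across the critical set. First I would reduce to a single irreducible component: the factor $P^0=\mathrm{id}$ on $\bbR^{n_0}$ contributes no loss of differentiability, and the product structure of \S2 lets the components be handled separately. The main tool is the chain rule in the weighted bookkeeping obtained by assigning the weight $k_i$ to the $i$-th target variable. Since $p_i$ is homogeneous of degree $k_i$, the Fa\`a di Bruno formula gives, on the regular set,
$$\partial_x^\alpha f=\sum_{|\beta'|\le|\alpha|\le w(\beta')}(\partial^{\beta'}F\circ P)\,R_{\alpha\beta'},\qquad w(\beta')=\sum_i\beta'_ik_i,$$
where each $R_{\alpha\beta'}$ is a polynomial in the derivatives of the $p_i$, weighted-homogeneous of degree $w(\beta')-|\alpha|$; in particular $R_{\alpha\beta'}$ is a nonzero constant exactly when $|\alpha|=w(\beta')$, and that constant is the coefficient of $(x-a)^\alpha$ in $\prod_ip_i^{\beta'_i}$.

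The key algebraic input is that the $p_i$ are algebraically independent, so for each weighted level $j$ the monomials $\prod_ip_i^{\beta'_i}$ with $w(\beta')=j$ are linearly independent polynomials; hence the constant matrix $\big(R_{\alpha\beta'}\big)_{|\alpha|=j,\;w(\beta')=j}$ has full column rank. This makes the system triangular in the weighted grading: I would solve for the weight-$j$ derivatives $\partial^{\beta'}F\circ P$ from the equations with $|\alpha|=j$, modulo the strictly higher-weight derivatives, whose coefficients $R_{\alpha\beta'}$ have positive weighted degree. Performing this by decreasing weighted level presents, on the regular set, each $\partial^\beta F\circ P$ as a rational expression $N_\beta/|J_P|^{m_\beta}$, with $N_\beta$ a polynomial combination of the $f_\alpha$ and of the $\partial p_i$, the denominator being a power of $|J_P|=c\prod_{\tau\in\mathcal R}\lambda_\tau$ coming from Cramer's rule.

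The hard part is precisely these higher-weight terms: since $|\beta'|\le|\alpha|\le hr$ only forces $|\beta'|\le hr$, the weighted order $w(\beta')$ of an error term can exceed $hr$, so the corresponding $\partial^{\beta'}F$ need not remain continuous up to the boundary and the decreasing induction leaves such out-of-range contributions uncontrolled by algebra alone. I expect to settle this by a removable-singularity argument for $N_\beta/|J_P|^{m_\beta}$ at the boundary of $P(\bbR^n)$. Near a stratum $S$ the slice structure of \S2 reduces matters to the isotropy group $W_S$ acting on the normal space, with $h_S$ the top degree of its invariants; the power of $|J_P|$ that can vanish there is governed by $h_S$, and the Whitney conditions $(W_q^{hr})$ satisfied by $f$ — valid on $P(\bbR^n)$ thanks to the $1$-regularity established in \S3 — force $N_\beta$ to vanish along $\{\lambda_\tau=0\}$ to exactly the order needed to cancel that pole whenever $w(\beta)\le hr$. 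In these terms, differentiating $F$ in the weight-$h_S$ normal direction preserves continuity at most $hr/h_S$ times, and $\sum_i\beta_ik_i\le hr$ is precisely the requirement that $\beta$ respect this bound at every stratum at once. Granting the cancellation, $N_\beta/|J_P|^{m_\beta}$ extends continuously across the critical set, so $\partial^\beta F\circ P$ extends continuously to all of $\bbR^n$; being $W$-invariant it descends to a continuous function on $P(\bbR^n)$, which is the assertion.
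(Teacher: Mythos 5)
Your setup is fine as far as it goes: the weighted Fa\`a di Bruno bookkeeping is correct, and your full-column-rank claim for the leading constant matrix is a valid consequence of the algebraic independence of $p_1,\dots,p_n$ (a combination $\sum_{\beta'}c_{\beta'}\prod_i p_i^{\beta'_i}$ with $w(\beta')=j$ is homogeneous of degree $j$, so vanishing of all its degree-$j$ coefficients forces it to vanish identically, hence all $c_{\beta'}=0$). But the elimination scheme you build on it does not close. The only equations available are those for $\partial^\alpha f$ with $|\alpha|\le hr$, and each such equation contains unknowns $\partial^{\beta'}F\circ P$ of every weight $w(\beta')$ from $|\alpha|$ up to $h\,|\alpha|$; in particular, unknowns with $w(\beta')>hr$, which are exactly the derivatives that genuinely blow up at the boundary, occur in every equation you want to use, multiplied by positive-degree coefficients. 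A ``decreasing weight'' induction cannot start: there are no equations indexed by $|\alpha|>hr$ from which to determine those high-weight unknowns, and they cannot be discarded unless one already knows that their products with the weighted-homogeneous coefficients extend continuously by $0$ across the critical set --- which is a growth estimate on $\partial^{\beta'}F$ near the boundary, i.e.\ precisely the kind of statement the proposition is meant to establish (it is the content of the Remark following the proposition in the paper, and it is derived \emph{from} the proposition, not available before it).

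The second, and decisive, gap is that your removable-singularity step is asserted rather than proved: ``the Whitney conditions force $N_\beta$ to vanish along $\{\lambda_\tau=0\}$ to exactly the order needed'' is the whole mathematical content of the result, and nothing in your argument produces that vanishing order. (Your appeal to $1$-regularity here is also misplaced: $f\in C^{hr}(\bbR^n)$ satisfies Whitney's conditions on $\bbR^n$ for free; $1$-regularity concerns compacts of $P(\bbR^n)$.) The paper obtains the cancellation from an algebraic fact about the Chevalley mapping, not from Whitney conditions on $f$: it differentiates once at a time, solving the $n\times n$ first-order system by Cramer's rule,
\[
c\Big(\prod_{\tau\in\mathcal R}\lambda_\tau\Big)\,\frac{\partial F}{\partial p_j}\circ P=\sum_{i=1}^n(-1)^{i+j}M_{i,j}\,\frac{\partial f}{\partial x_i},
\]
and uses that each cofactor $M_{i,j}$ is itself the Jacobian of a partial polynomial map invariant under the reflection subgroup $W_i$ fixing the $i$-th axis; hence $M_{i,j}$ is divisible by $\prod_{\tau\in\mathcal R_i}\lambda_\tau$ and is $(s_j-1)$-flat at the origin, with $s_j=\sum_{i\ne j}(k_i-1)$. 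This pins the loss of differentiability per $\partial/\partial p_j$ at exactly $1+d-s_j=k_j$ units at the origin, and the factorization $P=Q\circ V$ through the Chevalley map of the isotropy group $W_S$ shows the loss near any other stratum is $k'_j\le k_j$. Iterating one derivative at a time (which keeps every intermediate object within the controlled range, avoiding your out-of-range terms altogether) gives the total loss $\sum_i\beta_ik_i$ and the proposition. To rescue your approach you would need a proved substitute for the cofactor flatness/divisibility argument; without it, the proposal is a plausible plan, not a proof.
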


\begin{proof}
Let us begin with two remarks. First it is sufficient to study the problem for each $W^i$ acting on $\bbR^{n_i}$, so we may and will assume $W$ to be irreducible. Then since $P$ is proper if $\partial^{|\beta|}F /\partial x^{\beta} \circ P$ is continuous, \hbox{so is $\partial^{|\beta|}F /\partial x^{\beta}$}.

The first derivatives of $F$ with respect to $p_j$ are solutions of the system:
\[
\Big(\frac{\partial f}{\partial x}\Big)
=
\Big(\Big(\frac{\partial p_i}{\partial x_j}\Big)_{1\leq i,j\leq n}\Big)
\Big(\frac{\partial F}{\partial p}\circ P\Big).
\]
The solutions are given by:
\[
c\Big(\prod_{\tau\in \mathcal R}\lambda_{\tau}\Big)
\frac{\partial F}{\partial p_j}\circ P 
= 
\sum_{i=1}^n(-1)^{i+j} 
M_{i,j}\frac{\partial f}{\partial x_i}\raise 2pt\hbox{, }\quad j=1,\ldots,n.
\]
where $M_{i,j}$ is the Jacobian of the polynomial mapping:
\[(z_1,..., z_{i-1},z_{i+1}, ...,z_n; z_i) \mapsto  (p_1 (z),..., p_{j-1}(z), p_{j+1}(z), ..., p_n(z) ; z_i)\]
This mapping is invariant by the subgroup $W_i$ of $W$, that leaves invariant the $i^{th}$ coordinate axis. $W_i$ is a reflection group, product of the identity on $\bbR e_i$ and the reflection group generated by the set $\mathcal R_i\subset \mathcal R$ of those of the hyperplanes generating~$W$ containing $\bbR e_i$.  

Actually $M_{i,j}$ is an homogeneous polynomial of degree $s_j=\sum_{i=1,i\neq j}^n (k_i-1) $ which is $s_j-1$ flat at the origin and divisible by $\prod_{\tau\in \mathcal R_i}\lambda_{\tau}$.

At the origin there is a loss of differentiability of $1+d-s_j=k_j$ units. In the neighborhood of a stratum $S$ intersection of $p$ reflection hyperplanes, we write 
$$
P=Q\circ V
$$
with $Q$ invertible and $V$ a Chevalley mapping for the isotropy subgroup~$W_S$ of~$S$. We have 
$$
M_{i,j}=\sum_{l=1}^n V_{l,j}Q_{i,l}.
$$
The $V_{l,j}$ and accordingly the $M_{i,j}$ are $(s_p^j-1)$-flat on $S$, where $s_p^j$ is the degree of the $V_{l,j}$. Since $p$ is the number of reflections in $W_S$, there is a loss of differentiability of $1+p-s_p^j$ equal to the degree $k'_j$ of the~$j^{\mathrm {th}}$ polynomial of $V$ which is less than or equal to $k_j$. So overall the derivation of $F$ with respect to $p_j$ brings a loss of differentiability of $k_j$ units. 

As a consequence a derivation of order $\beta$ brings a loss of differentiability of $\sum_{i=1}^n \beta_i k_i$ units and if this sum is less than $hr$, the derivative will be continuous on~$P(\bbR^n)$. 
\end{proof}

In the neighborhood of $S$, the loss of differentiability entailed by a $\beta$-derivation is $\sum_{i=1}^n \beta_ik'_i$ where the $k'_i$ are the degrees of the invariant polynomials of $W_S$. Since this sum is less than $\sum_{i=1}^n \beta_i k_i$, more derivatives will be continuous in the neighborhood of $S$.

 If $\sum_{i=1}^n \beta_i k_i=hr$, the derivative 
 $\partial^{|\beta|}F /\partial x^{\beta}$ is continuous at the origin. Its derivative with respect to $p_j$ is not continuous but its product by~$|x|^{k_j}$ tends to $0$ with x. 
 
 When computing $\partial^{|\alpha|}(F\circ P)/\partial x^{\alpha}, $ with $|\alpha|\leq hr$ by the Faa di Bruno formula, the derivative $\partial^{|\beta|}F /\partial p^{\beta} \circ P$ is multiplied by (we use the notations of [7]) :
\[
\varepsilon_{\beta}
=
\alpha ! \sum_{E_\beta}
\prod^n_{j=1} 
\prod_{A_\alpha} 
\frac{1}{\mu_{iq^i}}
\Big(\frac{1}{q^i!}
\frac{\partial^{|q^i|} p_j}{\partial x^{q^i}}
\Big)^{\mu_{iq^i}}
\] 
with 
\[
E_\beta=\Big\{
(\mu_{1q^1}, \ldots, \mu_{nq^n}) \pv \mu_{iq^i} \in \mathbb{N}, 1\leq |q^i|\leq s, \sum_{|q^i|=1}^s \mu_{iq^i} =\beta_i, 1\leq i\leq n 
\Big\}
\]  
and 
\[A_\alpha =\Big\{Q= (q^i_j)  \pv 1\leq i,j\leq n, 1\leq |q^i|\leq s, 1\leq i\leq n, \sum_1^n\sum_{|q_i|}^s\mu_{iq^i}q^i =\alpha\Big\}.
\]

\begin{rema}\rm
The $p_j$ are homogeneous polynomials of degree $k_j$. Up to a multiplicative constant,  the terms  
$\prod^n_{j=1} \prod_{A_\alpha}(\partial^{|q_i|} p_j/\partial x^{q_i})^{\mu_{iq^i}}$
are equal to $|x|^{\sum k_j\beta_j-|\alpha|}$.
This exponent is strictly positive when $\sum\beta_jk_j>hr$ and we have  
\[
|x|^{\sum k_j\beta_j-|\alpha|}
\Big|\frac{\partial^{|\beta|}F}{\partial p^{\beta}}\circ P(x)\Big|
\longrightarrow 0,\quad \hbox{when }  x\rightarrow 0.
\]
In the neighborhood of a stratum $S$, we could show that $\varepsilon_\beta(x)$ is equivalent to a power of the distance from $x$ to $S$.
\end{rema}

So, when $f\in C^{hr}(\bbR^n)^W$, the function $L(f)=F$ is in a subspace of $\mathcal E^r(P(\bbR^n))$, the functions of which are of class $C^{rh}$ on the interior of $P(\bbR^n)$, in $\mathcal E^{rh/h_S}(P(S))$ on the strata $P(S)$ of the border of $P(\bbR^n)$. Additionally the behavior of the derivatives of $F$ that are lost on the border is as follows.
 
When $\sum_{i=1}^n \beta_i k_i> hr$, if the derivative $(\partial^{|\beta|}F/\partial p^{\beta} )$ is lost on strata  $S=P(A)$ of the border of $P(\bbR^n)$, the function equal to  $\varepsilon _{\beta}\cdot(\partial^{|\beta|}F/\partial p^{\beta} )\circ P$ on the interior~$C$ of $P(\bbR^n)$ and equal to $0$ on $A$ is continuous.

To be more consistent, we put $\varepsilon_{\beta}=1 $ when $\sum_{i=1}^n \beta_i k_i\leq hr$ and for any $\beta$ we will consider the functions $\varepsilon _{\beta}\cdot\partial^{|\beta|} F/\partial p^{\beta} \circ P$ that vanish on the strata $A$ when $\partial^{|\beta|}F /\partial p^{\beta}$ does not exist on $P(A)$.

If we define the semi-norms 
\[
|F |^{r,hr}_{P(K_s)}= \max_{|\beta|\leq hr,\,x\in K_s}
\Big|\varepsilon_{\beta} (x)
\frac{\partial^{|\beta|}F}{\partial p^{\beta}}\circ P(x)\Big|,
\]
\[
\tttv F\tttv ^{hr}_{P(K_s)}= \|F\|^r_{P(K_s)} + |F|^{r,hr}_{P(K_s)} ,
\] 
the Faa di Bruno formula shows that $|f|^{hr}_{K_s}$ is equivalent to $|F |^{r,hr}_{P(K_s)}$. By Whitney regularity $\|f\|^{hr}_{K_s}$ and $| F |^{r,hr}_{P(K_s)}$ are also equivalent.

From proposition 1.1, there is a constant $B_{K_s}$ such that 
$\|F\|^r_{P(K_s)}\leq B_{K_s}\|f\|^{hr}_{K_s}$. 

So, we have $\tttv F\tttv ^{hr}_{P(K_s)}\leq B^1_{K_s}\|f\|^{hr}_{K_s}$ for some constant $B^1_{Ks}$.
 Hence the continuity of $L$ when the target has the topology induced by the semi-norms 
$\tttv .\tttv ^{hr}_{P(K_s)}$. 

Since $L$ is a bijection on its image, by the Banach theorem, we have:

\begin{prop}
The map $L$ is a Fr\'echet isomorphism of $C^{hr}(\bbR^n)^W$ onto its image in  $\mathcal E^r(P(\bbR^n))$ provided with the topology induced by the semi-norms  $\tttv F\tttv ^{hr}_{P(K_s)}$.
\end{prop}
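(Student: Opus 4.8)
The plan is to realize $L$ as a continuous linear bijection onto its image and then to conclude by the open mapping theorem for Fr\'echet spaces, so that the whole argument reduces to checking that its hypotheses are genuinely in place. First I would record that the source $C^{hr}(\bbR^n)^W$ is itself a Fr\'echet space, being the closed subspace of $C^{hr}(\bbR^n)$ cut out by the closed linear invariance conditions $f\circ w=f$, $w\in W$, and hence inheriting completeness. Linearity and injectivity of $L$ have already been noted, so the task splits into two continuity statements, one for $L$ and one for $L^{-1}$, relative to the $\tttv\cdot\tttv^{hr}_{P(K_s)}$ topology on the target.

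For the continuity of $L$ I would simply assemble the estimates established just above the statement. The Faa di Bruno expansion gives that $|f|^{hr}_{K_s}$ is equivalent to $|F|^{r,hr}_{P(K_s)}$, Whitney $1$-regularity upgrades this to the equivalence of $\|f\|^{hr}_{K_s}$ with $|F|^{r,hr}_{P(K_s)}$, and Proposition~1.1 furnishes $\|F\|^r_{P(K_s)}\leq B_{K_s}\|f\|^{hr}_{K_s}$; adding the last two inequalities yields $\tttv F\tttv^{hr}_{P(K_s)}\leq B^1_{K_s}\|f\|^{hr}_{K_s}$, which is precisely the continuity of $L$. For the reverse direction I would read the same equivalences backwards: from $\|f\|^{hr}_{K_s}\sim|F|^{r,hr}_{P(K_s)}$ together with the trivial bound $|F|^{r,hr}_{P(K_s)}\leq\tttv F\tttv^{hr}_{P(K_s)}$ I obtain $\|f\|^{hr}_{K_s}\leq C_{K_s}\,\tttv F\tttv^{hr}_{P(K_s)}$. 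The two families of semi-norms being thus equivalent under $L$, its image is a topological copy of a complete space, hence itself a Fr\'echet space, and the Banach theorem closes the argument; in fact the two-sided estimate already exhibits $L^{-1}$ as continuous without any appeal to Banach.

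The step that carries the real weight is not this functional-analytic wrap-up but the equivalence $\|f\|^{hr}_{K_s}\sim|F|^{r,hr}_{P(K_s)}$, which is where the geometry of the Chevalley mapping enters: it forces one to control the derivatives $\partial^{|\beta|}F/\partial p^{\beta}$ that are lost on the boundary of $P(\bbR^n)$ through the weights $\varepsilon_\beta$, stratum by stratum, using the flatness of $\varepsilon_\beta$ as a power of the distance to each stratum $S$. Once that analysis---already carried out above by way of the Faa di Bruno formula and the $1$-regularity of $P(\bbR^n)$---is in hand, the passage to a Fr\'echet isomorphism is immediate.
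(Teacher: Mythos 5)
Your proof is correct and follows essentially the same route as the paper: both arguments assemble the previously established semi-norm estimates --- the Faa di Bruno equivalence of $|f|^{hr}_{K_s}$ with $|F|^{r,hr}_{P(K_s)}$, its upgrade via Whitney $1$-regularity to an equivalence with $\|f\|^{hr}_{K_s}$, and the bound $\|F\|^r_{P(K_s)}\leq B_{K_s}\|f\|^{hr}_{K_s}$ from Proposition 1.1 --- to obtain $\tttv F\tttv^{hr}_{P(K_s)}\leq B^1_{K_s}\|f\|^{hr}_{K_s}$, i.e.\ continuity of $L$. Where you genuinely improve on the paper is the endgame: the paper concludes by invoking the Banach theorem, which strictly speaking requires knowing that the image of $L$, equipped with the topology of the semi-norms $\tttv \cdot\tttv^{hr}_{P(K_s)}$, is itself a Fr\'echet (in particular complete) space --- a point the paper does not verify. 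Your observation that the reverse estimate $\|f\|^{hr}_{K_s}\leq C_{K_s}\tttv F\tttv^{hr}_{P(K_s)}$ falls directly out of the same equivalences (since trivially $|F|^{r,hr}_{P(K_s)}\leq\tttv F\tttv^{hr}_{P(K_s)}$) gives continuity of $L^{-1}$ outright, so no appeal to Banach --- and hence no completeness check on the image --- is needed; the two-sided semi-norm estimate alone exhibits $L$ as a Fr\'echet isomorphism onto its image. Your own parenthetical invocation of Banach is therefore redundant, as you note yourself, and the cleaner reading of your argument is the one that drops it.
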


\section* {References}

\parskip 3pt
\parindent -24pt
\leftskip -\parindent

[1]  G. Barban\c con. ---  Chevalley theorem in class $C^r$, 
{\it Proc.\kern 2pt Edinb. Math. Soc.} 
\textbf{139 A} (2009), pp.\kern 2pt  743--758.

[2]  G. Barban\c con. --- Whitney regularity of the image of the Chevalley mapping, 
{\it Proc.\kern 2pt Edinb. Math. Soc.} \textbf{146 A} (2016), pp.\kern 2pt  895--904

[3]  C. Chevalley.  --- Invariants of a finite group generated by reflections.  
{\it Amer. J.  Math.} \textbf{77}  (1955), pp.\kern 2pt  778--782.

[4]  H.S.M. Coxeter.  --- The product of the generators of a finite group generated by reflections.  {\it Duke Math. J.} 
\textbf{18}  (1951), pp.\kern 2pt   765--782.

[5]  Hernandez Encinas \& Munoz Masque. --- A short proof of the generalized Faa di Bruno's formula. {\it Applied Math. Letters} \textbf{16}  (2003), pp.\kern 2pt    975--979. 

[6]  G.\kern 2pt  Glaeser.  --- \'Etude de quelques alg\`ebres tayloriennes.  {\it J. Anal. Math.}
\textbf {6}  (1958), pp.\kern 2pt    1--124.

[7]  G.\kern 2pt  Glaeser. --- Fonctions compos\'ees diff\'erentiables. 
{\it Ann. of Math.} \textbf {77}  (1963), pp.\kern 2pt    193--209. 

[8]  M.L.\kern 2pt  Mehta.  --- Basic set of invariant polynomials for finite reflection groups.
{\it Commun. Alg.} \textbf{16}  (1988), pp.\kern 2pt    1083--1098.

[9]  J.C.\kern 2pt  Tougeron.  --- Id\'eaux de fonctions diff\'erentiables. 
{\it Ergeb. Math. Grenzgeb.}  \textbf{71},  Springer (1972).

[10]  H.\kern 2pt  Whitney.  --- Functions differentiable on the boundary of regions. 
{\it Ann. of Math.}  \textbf{35}   (1934), pp.\kern 2pt    482--485.

[11]  H.\kern 2pt Whitney.  ---
Analytic extension of differentiable functions defined in closed sets. 
{\it Ann. of Math.}  \textbf{36}    (1934), pp.\kern 2pt    63--89.
\end{document}